\documentclass[onefignum,onetabnum]{siamart190516}

\usepackage{lipsum}
\usepackage{amsfonts}
\usepackage{graphicx}
\usepackage{epstopdf}
\usepackage{algorithmic}
\ifpdf
  \DeclareGraphicsExtensions{.eps,.pdf,.png,.jpg}
\else
  \DeclareGraphicsExtensions{.eps}
\fi

\newsiamremark{remark}{Remark}
\newsiamremark{hypothesis}{Hypothesis}
\crefname{hypothesis}{Hypothesis}{Hypotheses}
\newsiamthm{claim}{Claim}

\headers{Efficient Vertex Retrieval}{M. Costandin}

\title{Vertex Loci of Max-Indicator Polytopes and Vertex Retrieval in Ball Polyhedra
}

\author{Costandin Marius\thanks{ \email{costandinmarius@gmail.com}.}
}
\usepackage{amsopn}

\newtheorem{prob}{Problem}
\usepackage{algorithmic}
\usepackage{color}

\begin{document}
\maketitle

\begin{abstract}
  The so called "max-indicator polytopes" were introduced in \cite{mainB} as an intersection of half-spaces related to the problem of finding the farthest in an intersection of balls $\mathcal{Q}$, from a given target point $C_S$. It was shown that if the target point belongs to the convex hull of the balls centers, then a specific intersection of half-spaces forms a single real variable parameterized polytope family which is used to characterized the extreme points. 

This paper, under the assumptions that the intersecting balls centers lie on a sphere and all have equal radii with only one vertex $x$, of $\mathcal{Q}$, meeting $1_{n \times 1}\cdot x = 0$, we provide an efficient algorithm to retrieve that vertex.  

The main idea behind the result is that the max-indicator polytopes vertices, as the family parameter varies, slide on some lines, that we call vertex locii. These lines pass through the space origin and the vertices of $\mathcal{Q}$. Perturbing the target point $C_S$ up and down along the direction $1_{n\times 1}$ we are able to show that the intersection of two simetrically perturbed max-indicator polytopes is strictly included in the unperturbed polytope, with the only boundary common points being on the line connecting the origin with the vertex of $\mathcal{Q}$ meeting $1_{n\times 1}^T\cdot x = 0$. As such, under the said assumptions, the only such vertex can be retrieved using linear programming.  
%
\end{abstract}

\begin{keywords}
 non-convex optimization, quadratic programming.
\end{keywords}

\begin{AMS}
  51-08
\end{AMS}

\section{Introduction}
%
%
%
%

In this paper, we propose the following problem. 
\begin{prob} \label{P1}
Given $n < m\in \mathbb{N}$ and $C_1, \hdots, C_m \in \mathbb{R}^n$, $R,\rho > 0$ fixed with $\|C_1\| = \hdots = \|C_m\| = \rho$ define the intersection of balls
\begin{align}
\mathcal{Q} = \bigcap_{k=1}^m \mathcal{B}(C_k,R)
\end{align} that is $\mathcal{Q}$ is a ball polyhedra. 

Knowing that $\exists ! x^{\star}$ a vertex of $\mathcal{Q}$ with $1_{n\times 1}^T\cdot x^{\star} = 0$, find $x^{\star}$. 
\end{prob}

%

The main result of this paper is the following theorem:
\begin{theorem}\label{T1.1}
 Exists an algorithm  of polynomial complexity in $m,n$ algorithm which finds $x^{\star}$, the solution to Problem \ref{P1}.
\end{theorem}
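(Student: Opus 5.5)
We plan to follow the strategy announced in the abstract: encode the vertices of $\mathcal{Q}$ through the max-indicator polytopes of \cite{mainB}, perturb the target point along $1_{n\times 1}$, and read off $x^{\star}$ from a single linear program. First we reduce to a convenient geometric setting. Since $\|C_k\|=\rho$ for all $k$, the condition $\|x-C_k\|^2\le R^2$ is equivalent to
\begin{align*}
\|x\|^2+\rho^2-R^2 \le 2C_k^T x ,
\end{align*}
so every vertex $v$ of $\mathcal{Q}$ has a set of active indices $I(v)$ with $C_k^T v = (\|v\|^2+\rho^2-R^2)/2$ for all $k\in I(v)$. The common value depends only on $\|v\|$. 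Hence, after scaling $v\mapsto tv$, the active constraints stay mutually equal. This is exactly why the vertex loci are lines through the origin.

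Concretely, we define the max-indicator polytope
\begin{align*}
\mathcal{P}(C_S,\lambda)=\{x : (C_k-C_S)^T x\ge \lambda,\ k=1,\dots,m\}.
\end{align*}
Up to the normalization used in \cite{mainB}, these are half-spaces linear in $x$ with a real parameter $\lambda$. When $C_S$ lies in the convex hull of the centers (we take $C_S=0$ after translating, or a point near the origin), we show that each vertex of $\mathcal{P}(C_S,\lambda)$ is $\lambda w$ for a fixed direction $w$ determined by an active index set. The directions $w$ attached to active sets of vertices of $\mathcal{Q}$ are precisely the directions of those vertices. This step uses the characterization of extreme points from \cite{mainB}, which we assume.

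Next we perturb the target to $C_S^{\pm}=C_S\pm\varepsilon 1_{n\times 1}$. The constraints then become $(C_k-C_S)^Tx \mp \varepsilon 1^Tx\ge\lambda$, so on the hyperplane $1^Tx=0$ both perturbed polytopes agree with the unperturbed one. Off it, one of the two perturbations strictly tightens every constraint. We aim to prove
\begin{align*}
\mathcal{P}^+\cap\mathcal{P}^-\subseteq\mathcal{P},
\end{align*}
with boundary contact only along loci lying in $\{1^Tx=0\}$. By the uniqueness hypothesis of Problem~\ref{P1}, that is the single line $\mathbb{R}x^{\star}$.

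The algorithm is then to maximize a generic linear functional, or to detect the contact direction, over $\mathcal{P}^+\cap\mathcal{P}^-\cap\{1^Tx=0\}$ at a fixed $\lambda$. This is an LP with $2m+1$ constraints in $n$ variables, solvable in polynomial time by interior-point methods. It returns a point $\lambda w^{\star}$ on the locus, from which we recover the direction $w^{\star}$. Finally, $x^{\star}=t w^{\star}$ with $t$ the root of a scalar quadratic, namely $\|tw^{\star}-C_k\|=R$ for an active $k$, chosen so that all ball constraints hold. This last step costs $O(mn)$.

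The main obstacle is the strict inclusion and the exact characterization of the common boundary points. We must show that no face of $\mathcal{P}$ other than the locus through $x^{\star}$ survives in both perturbed polytopes. Our first attempt will be a first-order expansion in $\varepsilon$. The perturbation changes each constraint by $\mp\varepsilon 1^Tx$, which vanishes exactly when $1^Tx=0$. So a boundary point shared by both perturbed polytopes must satisfy $1^Tx=0$ and lie on a vertex locus, and uniqueness of such a vertex then pins down $x^{\star}$. A secondary risk is that nondegeneracy is needed, both for the LP output to be a vertex rather than a face and for $\varepsilon$ to be small but of polynomial bit size. We would handle this by symbolic perturbation of $\varepsilon$.
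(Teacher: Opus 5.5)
Your route is the paper's: its choice of $r(\zeta)$ in (\ref{E2.8}) keeps $-r(\zeta)^2+\|C_S(\zeta)\|^2$ constant, which is exactly your fixed level $\lambda$. It breaks at the step you flagged as the main obstacle, and neither a first-order expansion nor symbolic perturbation can repair it. Your own computation gives the exact description
\begin{align*}
\mathcal{P}^+\cap\mathcal{P}^- = \{x : (C_k-C_S)^T x \ge \lambda + \varepsilon\,|1_{n\times 1}^T x|,\ k=1,\dots,m\}.
\end{align*}
This makes the inclusion in $\mathcal{P}$ immediate. It equally shows $\mathcal{P}^+\cap\mathcal{P}^-\cap H=\mathcal{P}\cap H$ for $H=\{x : 1_{n\times 1}^T x=0\}$. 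So every point of $\partial\mathcal{P}\cap H$ is a common boundary point, not only the points on vertex loci.

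These points are not just $x^\star$. Averaging the constraints with the convex weights of $C_S$ shows that $\mathcal{P}\neq\emptyset$ forces $\lambda\le 0$. At $\lambda=0$ every vertex $\lambda w$ collapses to $0$, so you are in the regime $\lambda<0$. There $0\in\text{int}(\mathcal{P})$, $H$ cuts through the interior of $\mathcal{P}$, and $\partial\mathcal{P}\cap H$ is the relative boundary of the $(n-1)$-dimensional slice $\mathcal{P}\cap H$. For $n=2$ the line $H$ meets $\partial\mathcal{P}$ in two points, only one of which can be the vertex on the $x^\star$-locus. For $n\ge 3$ it is an $(n-2)$-dimensional polyhedral sphere lying mostly in relative interiors of facets.

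Your claim that a shared boundary point ``must satisfy $1^Tx=0$ and lie on a vertex locus'' is therefore false in its second half, and the algorithm fails with it. The LP over $\mathcal{P}^+\cap\mathcal{P}^-\cap H$ is literally the LP over $\mathcal{P}\cap H$, so the perturbation contributes nothing. Its optimum for a generic objective is some vertex of the slice, typically a crossing of $H$ with an edge of $\mathcal{P}$, and nothing forces it to be $\lambda w^\star$.

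The paper does not supply the missing step either. Theorem \ref{T2.6} asserts exactly your contact claim and fails for the same reason. Its final LPs maximize a facet normal $F$ of $\mathcal{P}$ over $\mathcal{P}^+\cap\mathcal{P}^-$; every point of $\mathcal{F}\cap H$ is optimal, so $x^\star$ is among the optimizers but is not singled out.

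The strict inequality (\ref{E2.20c}) behind Theorem \ref{T2.6} rests on an algebra slip in (\ref{E2.40}). Write $\lambda_k=1_{n\times 1}^T\Lambda_k^{-1}1_{n\times 1}$ (the paper's $\lambda$, not your level). Lemma \ref{L2.4} gives $z_k(\pm\zeta)=y_k/(1\pm 2\zeta\lambda_k)$, so $y_k=\psi_k z_k(-\zeta)+(1-\psi_k)z_k(\zeta)$ yields $\psi_k=(1-2\zeta\lambda_k)/2$, not $(1+2\zeta\lambda_k)/2$. With the correct value, $a_k=1-2\zeta\lambda_k$ and $b_k=1+2\zeta\lambda_k$, so $(a_k+b_k)/2=1$ identically. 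The argument then only gives $\sum_k\gamma_k\le 1$, and equality forces nothing beyond $1_{n\times 1}^T x=0$, in agreement with the counterexample above.

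A correct proof would need a genuinely different way to distinguish a vertex of $\mathcal{P}$ on $H$ from the other points of $\partial\mathcal{P}\cap H$. The symmetric perturbation along $1_{n\times 1}$ cannot do this, because on $H$ it reproduces $\mathcal{P}$ exactly.
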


\section{Main Results}

For this subsection, we recall the main results in \cite{mainB} regarding the maximization of the distance to a point over a finite intersection of balls. As in \cite{mainB} let
\begin{align}
h(x) = \max_{k \in \{1, \hdots, m\} } \|x - C_{k}\|^2 - R^2 \hspace{0.5cm} g(x) = \|x - C_S\|^2.
\end{align} Note that
\begin{align}
h(x) - g(x) = \max_{k \in \{1, \hdots, m\} } 2 \cdot (C_S - C_{k})^T \cdot x + \|C_{k}\|^2 - R^2 - \|C_S\|^2
\end{align}
 Choose $C_S \in \text{conv}\{C_1, \hdots, C_m\}$ and define the max indicator polyhedral set family
\begin{align}
\mathcal{P}_{C_S,r^2} = \{x | h(x) - g(x) \leq -r^2 \}
\end{align} where, for a fixed balls centers and radii, we indicate that each member of the family depends on the point $C_S$ (from which we want to find the farthest in the intersection of balls) and  a real parameter $r$. The main result in \cite{mainB} is a characterization of the farthest in $\mathcal{Q}$ (the intersection of balls) to the point $C_S$ in terms of the evolution of the polyhedral family $\mathcal{P}_{C_S,r^2}$ with varying $r$. 
\subsection{Analysis of max-indicators polytopes. Vertex Locii}
As a brief recall of the results in \cite{mainB} applied to the present case, since $C_S$ belongs to the convex hull of the balls centers, in the said reference, is shown that $\mathcal{P}_{C_S, 0}$ includes the intersection of balls $\mathcal{Q}$. Increasing $r$, the polytopes shrink, and for each polytope vertex $v_i$ exists $r_i$, such that for that specific $r_i$, in it's way down, the polytope vertex passes through the vertex of the intersection of balls. The value of $r$ corresponding to the last vertex to enter $\mathcal{Q}$ is the maximim distance to $C_S$ in $\mathcal{Q}$ and the corresponding vertices are the extreme points. 

Therefore, increasing $r$, but keeping $C_S$ fixed, the vertices of $\mathcal{P}_{C_S,r^2}$ slide on some lines. However, we show below, that if one modifies $C_S \to D_S$ and keeps $r$ constant, the vertices of the resulting polytope, $\mathcal{P}_{D_S, r^2}$ will be different, but they are on the same lines the vertices of $\mathcal{P}_{C_S, r^2}$ slide. This means that the verices of the max-indicator polytopes, as we call these polytopes, slide on the same lines.

%
%
In the following, we use these polytopes to seach for the special vertex of $\mathcal{Q}$ on the said hyperplane.
 
We begin with a general result about the polytopes $\mathcal{P}_{C_S,r^2}$.
 
\begin{lemma}\label{L3.5b}

Let $r>0$, $D_S \in \mathbb{R}^n$ and $x$ be a vertex of $\mathcal{P}_{C_S,r^2}$ such that
\begin{enumerate}
\item the following holds
\begin{align}\label{E2.4d}
r_D^2 = r^2 + \|D_S\|^2 - \|C_S\|^2 - 2 \cdot (D_S - C_S)^T\cdot x > 0
\end{align}
\item  the polytopes $\mathcal{P}_{C_S, r^2}$ and $\mathcal{P}_{D_S, r_D^2}$ are combinatorically equivalent. 

\end{enumerate}

then $x$ is also a vertex of $\mathcal{P}_{D_S,r_{D}^2}$. 
\end{lemma}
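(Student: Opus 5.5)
The plan is to compare the two polytopes constraint by constraint at the point $x$, and then use the combinatorial equivalence to pass from "same active set" to "vertex". Write the $k$-th defining inequality of $\mathcal{P}_{C_S,r^2}$ as $\phi_k^{C}(y)\le 0$, where
\begin{align*}
\phi_k^{C}(y) = 2\,(C_S - C_k)^T y + \|C_k\|^2 - R^2 - \|C_S\|^2 + r^2 ,
\end{align*}
and write the $k$-th inequality of $\mathcal{P}_{D_S,r_D^2}$ analogously as $\phi_k^{D}(y)\le 0$, with $C_S$ replaced by $D_S$ and $r^2$ by $r_D^2$. The first step is the direct computation
\begin{align*}
\phi_k^{D}(x) - \phi_k^{C}(x) = 2\,(D_S - C_S)^T x - \|D_S\|^2 + \|C_S\|^2 + r_D^2 - r^2 .
\end{align*}
By the definition (\ref{E2.4d}) of $r_D^2$, this difference is zero for every $k$. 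Thus each constraint has exactly the same slack at $x$ in both polytopes.

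Two consequences follow immediately. First, $x\in\mathcal{P}_{D_S,r_D^2}$. Second, the active index set
\begin{align*}
I(x)=\{k : \phi_k^{C}(x)=0\}
\end{align*}
coincides with $\{k : \phi_k^{D}(x)=0\}$. The hypothesis $r_D^2>0$ guarantees that $\mathcal{P}_{D_S,r_D^2}$ is a genuine member of the family considered in \cite{mainB}, so the comparison takes place between two polytopes of the same type.

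It remains to show that $x$ is a vertex of $\mathcal{P}_{D_S,r_D^2}$, and not merely a point lying on the face cut out by $I(x)$. This is the main obstacle. The normals $D_S - C_k$, $k\in I(x)$, differ from the normals $C_S - C_k$, so the rank-$n$ condition for a vertex does not transfer automatically: a translate of a set of spanning vectors need not span.

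To handle this I will use hypothesis 2, reading combinatorial equivalence as an isomorphism of face lattices that sends the facet indexed by $k$ to the facet indexed by $k$. Under that reading, the face of $\mathcal{P}_{C_S,r^2}$ on which exactly the constraints in $I(x)$ are tight is the vertex $\{x\}$. Hence the corresponding face
\begin{align*}
F_D=\{y\in\mathcal{P}_{D_S,r_D^2} : \phi_k^D(y)=0,\ k\in I(x)\}
\end{align*}
is a $0$-dimensional face of $\mathcal{P}_{D_S,r_D^2}$, i.e.\ a single vertex. Since $x\in F_D$ by the first step, $F_D=\{x\}$, and $x$ is a vertex of $\mathcal{P}_{D_S,r_D^2}$.

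If the equivalence is not given with this index matching, I would first argue that the labeling can be arranged. The map $\phi_k^{C}\mapsto\phi_k^{D}$ deforms continuously along $D_S(t)=C_S+t(D_S-C_S)$, and the slack identity holds for every $t$ (with the corresponding $r_{D(t)}^2$). So the vertex $x$ stays feasible with the same active set throughout, and the combinatorial type, assumed constant, then forces the natural index-preserving identification.
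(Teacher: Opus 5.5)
Your proof is correct and follows essentially the paper's argument. The choice of $r_D^2$ is exactly what makes $x$ satisfy the tight equations of $\mathcal{P}_{D_S,r_D^2}$ for the same centers; your identity for all $k$ also gives feasibility of $x$, which the paper leaves implicit. The combinatorial equivalence, read as you do with facets matched by index (the paper's ``vertex generated by the same balls centers''), then identifies $x$ with the corresponding vertex.

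Your final fallback paragraph is unnecessary and unsupported. The lemma only assumes the two endpoint polytopes are equivalent, not that the combinatorial type stays constant along $D_S(t)$, so drop it.
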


\begin{proof}
Indeed, let $x$ be a vertex of $\mathcal{P}_{C_S,r^2}$ formed by the balls centers $C_{k_1}, \hdots, C_{k_n}$
\begin{align}\label{E2.5c}
\begin{bmatrix} 2 \cdot (C_S - C_{k_1})^T\\ \vdots \\ 2 \cdot (C_S - C_{k_n})^T\end{bmatrix} \cdot x &= \begin{bmatrix} -r^2 + \|C_S\|^2\\ \vdots \\ -r^2 + \|C_S\|^2  \end{bmatrix} + \begin{bmatrix} R^2 - \|C_{k_1}\|^2\\ \vdots \\ R^2 - \|C_{k_n}\|^2 \end{bmatrix} = \nonumber \\
& = (-r^2 + \|C_S\|^2 + R^2 - \rho^2) \cdot 1_{n \times 1}
\end{align} since we assumed that $\|C_{k_1}\| = \hdots = \|C_{k_m}\| = \rho$. Let $y$ be the corresponding (generated by the same balls centers) vertex in the polytope $\mathcal{P}_{D_S, r_D^2}$:

\begin{align}\label{E2.6c}
\begin{bmatrix} 2 \cdot (D_S - C_{k_1})^T\\ \vdots \\ 2 \cdot (D_S - C_{k_n})^T\end{bmatrix} \cdot y &= \begin{bmatrix} -r_D^2 + \|D_S\|^2\\ \vdots \\ -r_D^2 + \|D_S\|^2  \end{bmatrix} + \begin{bmatrix} R^2 - \|C_{k_1}\|^2\\ \vdots \\ R^2 - \|C_{k_n}\|^2 \end{bmatrix} \nonumber \\
\begin{bmatrix} 2 \cdot (C_S - C_{k_1})^T\\ \vdots \\ 2 \cdot (C_S - C_{k_n})^T\end{bmatrix} \cdot y &= \begin{bmatrix} -r_D^2 + \|D_S\|^2\\ \vdots \\ -r_D^2 + \|D_S\|^2  \end{bmatrix} + \begin{bmatrix} R^2 - \|C_{k_1}\|^2\\ \vdots \\ R^2 - \|C_{k_n}\|^2 \end{bmatrix} - 2\cdot \begin{bmatrix} (D_S - C_S)^T\\ \vdots \\ (D_S - C_S)^T \end{bmatrix} \cdot y \nonumber \\
\end{align}  

Choose $r_D$ in (\ref{E2.6c}) such that 
\begin{align}\label{E3.20a}
-r_D^2 &+ \|D_S\|^2 - 2\cdot (D_S - C_S)^T\cdot x = -r^2 + \|C_S\|^2 \iff \nonumber \\
r_D^2 &= r^2 + \|D_S\|^2 - \|C_S\|^2 - 2 \cdot (D_S - C_S)^T\cdot x 
\end{align} then $x$ is also a solution to (\ref{E2.6c}).  
\end{proof}


\begin{remark}
One can see $\mathcal{P}_{D_S,r_D^2}$ as a perturbation of $\mathcal{P}_{C_S,r^2}$. These two polytopes indeed have different normal facet vectors i.e. $D_S - C_{k_i}$ for $\mathcal{P}_{D_S,r_D^2}$ and $C_S - C_{k_i}$ for $\mathcal{P}_{C_S,r^2}$, yet their corners, as $r$ respectively $r_D$ vary, translating the facets, \textbf{slide on the same lines}.

 Note that although all the corners of the two polytopes slide on the same lines, they do so independently, i.e. the parameter $r_D$ depends on the chosen corner. These lines, therefore, can be seen as some \textbf{vertex locus} and depend on the set $Q$ only, not on the point to which we are trying to find the fartest from in $\mathcal{Q}$. 
\end{remark} 

\subsection{Usage of max-indicators polytopes for vertex search}

In the following, we randomly take $r > 0$ and $C_S \in \text{conv}\{C_1, \hdots, C_m\} \setminus \{0_{n \times 1}\}$ and define 
\begin{align}\label{E2.8}
C_S(\zeta) &= C_S + \zeta \cdot 1_{n\times 1} \nonumber \\
r(\zeta)^2 &= r(0)^2 + \|C_S(\zeta)\|^2 - \|C_S(0)\|^2
\end{align}  where $r(0) := r$. 

In the rest of the paper we assume that the polytopes $\mathcal{P}_{C_S(\zeta), r^2(\zeta)}$ are combinatorically equivalent to $\mathcal{P}_{C_S,0}$ for all $\zeta \in \left[ \frac{-\epsilon}{\sqrt{n}}, \frac{\epsilon}{\sqrt{n}}\right]$ where $\epsilon > 0$ is a fixed number. 


In the following lemma, we establish how are the vertices of the polytopes $\mathcal{P}_{C_S(\zeta), r(\zeta)^2}$ move with varying $\zeta$. 

\begin{lemma}\label{L2.4}
For a fixed $r(0) = r$, let $y$ be a vertex of $\mathcal{P}_{C_S(0), r(0)^2}$ and $z$ the combinatorically equivalent perturbed vertex in the polytope $\mathcal{P}_{C_S(\zeta), r(\zeta)^2}$, i.e $y = z(0)$. Then 

\begin{align}
z(\zeta) = \left( 1 -  \frac{2 \cdot \zeta \cdot \lambda}{1 + 2\cdot \zeta \cdot \lambda} \right) \cdot y
\end{align}
where 
\begin{align}
\Lambda \cdot y = (-r^2+\|C_S\|^2 + R^2-\rho^2) \cdot 1_{n\times 1} \hspace{0.5cm} \lambda = 1_{n\times 1}^T \cdot \Lambda^{-1} \cdot 1_{n\times 1} 
\end{align}
\end{lemma}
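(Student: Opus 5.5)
The plan is to write down the linear system that determines the perturbed vertex $z(\zeta)$ and to observe that it differs from the system for $y$ by a rank-one matrix. The inverse of that matrix is then given by the Sherman--Morrison formula. Let $C_{k_1},\hdots,C_{k_n}$ be the balls centers generating $y$, exactly as in the proof of Lemma \ref{L3.5b}. Let $\Lambda$ be the $n\times n$ matrix whose $i$-th row is $2\cdot(C_S - C_{k_i})^T$, so that (\ref{E2.5c}) reads $\Lambda\cdot y = \kappa\cdot 1_{n\times 1}$ with $\kappa := -r^2+\|C_S\|^2+R^2-\rho^2$. Since $y$ is a vertex determined by these $n$ active constraints, $\Lambda$ is invertible, and hence $y = \kappa\cdot\Lambda^{-1}\cdot 1_{n\times 1}$.

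By the standing assumption that $\mathcal{P}_{C_S(\zeta),r(\zeta)^2}$ is combinatorially equivalent to the unperturbed polytope, the vertex $z(\zeta)$ is generated by the same centers $C_{k_1},\hdots,C_{k_n}$. It therefore satisfies the analogue of (\ref{E2.6c}) with $D_S = C_S(\zeta)$ and $r_D = r(\zeta)$:
\begin{align*}
2\cdot(C_S + \zeta\cdot 1_{n\times 1} - C_{k_i})^T\cdot z = -r(\zeta)^2 + \|C_S(\zeta)\|^2 + R^2 - \rho^2, \quad i=1,\hdots,n.
\end{align*}
By the definition (\ref{E2.8}) of $r(\zeta)^2$, the right-hand side equals $-r^2 + \|C_S\|^2 + R^2-\rho^2 = \kappa$, independently of $\zeta$. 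The left-hand side matrix is $\Lambda + 2\zeta\cdot 1_{n\times 1}\cdot 1_{n\times 1}^T$. Hence
\begin{align*}
\left(\Lambda + 2\zeta\cdot 1_{n\times 1}1_{n\times 1}^T\right)\cdot z(\zeta) = \kappa\cdot 1_{n\times 1} = \Lambda\cdot y.
\end{align*}

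Next we invert this system with the Sherman--Morrison formula. Provided $1+2\zeta\lambda \neq 0$,
\begin{align*}
\left(\Lambda + 2\zeta\cdot 1_{n\times 1}1_{n\times 1}^T\right)^{-1} = \Lambda^{-1} - \frac{2\zeta\cdot\Lambda^{-1}1_{n\times 1}1_{n\times 1}^T\Lambda^{-1}}{1+2\zeta\lambda}.
\end{align*}
Applying this to $\kappa\cdot 1_{n\times 1}$ and using $1_{n\times 1}^T\Lambda^{-1}1_{n\times 1} = \lambda$ together with $\kappa\Lambda^{-1}1_{n\times 1} = y$ gives
\begin{align*}
z(\zeta) = y - \frac{2\zeta\lambda}{1+2\zeta\lambda}\cdot y,
\end{align*}
which is the claimed formula. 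As a byproduct, $z(\zeta)$ lies on the line through the origin and $y$, in agreement with the vertex-locus remark following Lemma \ref{L3.5b}.

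The only delicate point is justifying the invertibility of the perturbed matrix, i.e.\ that $1+2\zeta\lambda\neq 0$. The matrix determinant lemma gives $\det(\Lambda+2\zeta 1_{n\times 1}1_{n\times 1}^T) = \det\Lambda\cdot(1+2\zeta\lambda)$. Combinatorial equivalence for all $\zeta$ in $\left[\frac{-\epsilon}{\sqrt n},\frac{\epsilon}{\sqrt n}\right]$ says the perturbed vertex exists and is determined by these $n$ constraints, which forces the determinant to be nonzero. Consequently $1+2\zeta\lambda \neq 0$ throughout that interval, and the formula holds for all $\zeta$ considered.
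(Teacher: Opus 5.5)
Your proof is correct and follows the paper's route. You identify the perturbed vertex system as the rank-one update $(\Lambda + 2\zeta\cdot 1_{n\times 1}1_{n\times 1}^T)\cdot z = \Lambda\cdot y$, with the right-hand side kept fixed by the choice of $r(\zeta)$, and invert it with Sherman--Morrison. Your version is slightly cleaner than the paper's: applying the inverse directly to $\kappa\cdot 1_{n\times 1}$ avoids the paper's division by $\Psi$, and you explicitly justify $1+2\zeta\lambda\neq 0$, which the paper leaves implicit.
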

\begin{proof} 
%

Let us now analyze the vertex $\Lambda \cdot y = \left( -r^2+\|C_S\|^2+ R^2 - \rho^2 \right) \cdot 1_{n\times 1}$ in $\mathcal{P}_{C_S, r^2}$, where $\Lambda \in \mathbb{R}^{n\times n}$ is the system matrix. The corresponding vertex in $\mathcal{P}_{C_S(\zeta), r(\zeta)^2}$ is found similar to (\ref{E2.6c}) as a rank one update to the vertex system matrix

\begin{align}\label{E2.26}
\left(\Lambda + 2\cdot 1_{n\times 1} \cdot (-C_S + C_S(\zeta))^T \right) \cdot z &= \left( -r(\zeta)^2+\|C_S(\zeta)\|^2 + R^2 - \rho^2 \right) \cdot 1_{n\times 1}  \nonumber \\
\left(\Lambda + 2\cdot \zeta \cdot 1_{n\times 1} \cdot 1_{n\times 1}^T \right) \cdot z &= \left( -r(\zeta)^2+\|C_S(\zeta)\|^2 + R^2 - \rho^2  \right) \cdot 1_{n\times 1} \nonumber \\
& = \Lambda \cdot y
\end{align}

As such from (\ref{E2.26}) we get
\begin{align}\label{E2.28}
z = \left( \Lambda + 2\cdot \zeta \cdot 1_{n \times 1}\cdot 1_{n \times 1}^T \right)^{-1} \cdot \Lambda \cdot y 
\end{align}

Using Sherman-Morrison inversion formula, it is obtained
\begin{align}\label{E2.13c}
 \left( \Lambda + 2\cdot \zeta \cdot 1_{n \times 1}\cdot 1_{n \times 1}^T \right)^{-1}  = \Lambda^{-1} - 2\cdot \zeta \cdot \frac{\Lambda^{-1} \cdot 1_{n\times 1} \cdot 1_{n \times 1}^T \cdot \Lambda^{-1}}{1 + 2\cdot \zeta \cdot 1_{n\times 1}^T \cdot \Lambda^{-1} \cdot 1_{n \times 1}}
\end{align} Let 
\begin{align}\label{E2.14c}
M &= \Lambda^{-1} \cdot 1_{n\times 1} \cdot 1_{n \times 1}^T \hspace{0.5cm} \lambda = 1^T_{n\times 1} \cdot \Lambda^{-1} \cdot 1_{n\times 1} \hspace{0.5cm} 
\end{align} then we obtain from (\ref{E2.13c}), (\ref{E2.14c}) and (\ref{E2.28})

\begin{align}\label{E2.31}
z &= y - 2 \cdot \zeta \cdot \frac{M \cdot y}{1 + 2\cdot \lambda \cdot \zeta} 
\end{align} 

However since $y = (-r^2 + \|C_S\|^2 + R^2 - \rho^2) \cdot \Lambda^{-1}\cdot 1_{n \times 1}$ 
Let us define
\begin{align}
\Psi = -r^2 + \|C_S\|^2 + R^2 - \|C_k\|^2
\end{align}  hence $y = \Psi \cdot \Lambda^{-1}\cdot 1_{n \times 1}$. One has
\begin{align} \label{E2.32}
M \cdot y &= \Lambda^{-1}\cdot 1_{n\times 1} \cdot 1_{n \times 1}^T \cdot y  \nonumber \\
& =  \frac{1}{\Psi} \cdot y \cdot 1_{n\times 1}^T \cdot \Psi \cdot \Lambda^{-1} \cdot 1_{n\times 1} = \lambda \cdot y
\end{align} 
then from (\ref{E2.14c}) and (\ref{E2.32}) it is obtained

\begin{align}
\lambda = \frac{1_{n\times 1}^T\cdot y}{\Psi}\hspace{0.5cm} 
M\cdot y = \frac{1_{n\times 1}^T \cdot y }{\Psi} \cdot y = \lambda \cdot y
\end{align}

therefore (\ref{E2.31}) becomes 

\begin{align}
z(\zeta) = \left( 1 -  \frac{2 \cdot \zeta \cdot \frac{1_{n\times 1}^T \cdot y}{\Psi}}{1 + 2\cdot \zeta \cdot \frac{  1_{n\times 1}^T \cdot y}{\Psi}} \right) \cdot y = \left( 1 -  \frac{2 \cdot \zeta \cdot \lambda}{1 + 2\cdot \zeta \cdot \lambda} \right) \cdot y
\end{align}
\end{proof}

We'll give now a series of remarks on the interpretation of the above lemma. 
\begin{remark}\label{E2.4}
In the above Lemma \ref{L2.4} one can see that the vertex locus for each vertex, is a radial line starting from origin and passing through the vertex of $\mathcal{Q}$. As such, if there is a vertex $x$ of $\mathcal{Q}$ meeting $x^T\cdot 1_{n \times 1} = 0$ then the vertex locus itself belongs to the said hyperplane. 

The contrary is true as well, if the vertex of $\mathcal{Q}$ does not belong to the hyperplane, then the vertex locus does not. 
\end{remark}

\begin{remark}
From the above Remark \ref{E2.4} the polytope $\mathcal{P}_{C_S, r^2}$, has exactly one vertex $x^{\star}$ with $1_{n\times 1}^T\cdot x = 0$, i.e. the one on the single vertex locus in the said hyperplane. 

This together with the definition of $r(\zeta)$ in (\ref{E2.8}) which matches (\ref{E2.4d}), assures that $x^{\star}$ is a vertex in all $\mathcal{P}_{C_S(\zeta), r(\zeta)^2}$. Indeed, since $ 1_{n\times 1}^T \cdot x^{\star} = 0$ follows $\lambda = 0$ hence from Lemma \ref{L2.4} follows that $x^{\star}(\zeta) = x^{\star} = x^{\star}(0)$. 
\end{remark}

The following lemma is also concerned with various characterization of the dynamics of the moving vertices under varying $\zeta$. These shall be used in the proof of the main theorem. 

\begin{lemma}
Let $q = \frac{\|z(\zeta)\|}{\|z(-\zeta)\|}$ and $\psi \in [0,1]$ such that $y = \psi\cdot z(-\zeta) + (1-\psi)\cdot z(\zeta)$. Then  if $1_{n\times 1}^T\cdot y \neq 0$ one has
\begin{align} \label{E2.20c}
\psi + (1-\psi)\cdot q + \frac{\psi}{q} + (1-\psi) > 2.
\end{align} 
\end{lemma}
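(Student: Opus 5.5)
The plan is to reduce everything to scalars using Lemma \ref{L2.4}. Since $1_{n\times 1}^T\cdot y \neq 0$, we have $y\neq 0$ and $\lambda = 1_{n\times 1}^T\cdot y/\Psi \neq 0$. Write $t = 2\cdot\zeta\cdot\lambda$, $a = 1+t$ and $b = 1-t$. Lemma \ref{L2.4} gives
\begin{align*}
z(\zeta) = \left(1-\frac{t}{1+t}\right)\cdot y = \frac{1}{a}\cdot y, \hspace{0.5cm} z(-\zeta) = \frac{1}{b}\cdot y .
\end{align*}
So both points lie on the vertex locus through $y$, and the whole statement becomes a statement about the numbers $a,b$. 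I will work in the admissible range $\zeta\in\left[\frac{-\epsilon}{\sqrt{n}},\frac{\epsilon}{\sqrt{n}}\right]$ and assume $|t|<1$, so that $a,b>0$ and $a+b=2$. Then $q = b/a$.

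Next I identify $\psi$. Comparing coefficients of $y$ in $y = \psi\cdot z(-\zeta)+(1-\psi)\cdot z(\zeta)$ gives $\psi/b + (1-\psi)/a = 1$. Because $t\neq 0$ (as $\zeta\neq 0$, $\lambda\neq 0$), this has the unique solution $\psi = b/2$, $1-\psi = a/2$. These lie in $[0,1]$ exactly when $|t|\le 1$, which confirms that the hypothesis $\psi\in[0,1]$ is consistent with the chosen range.

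Substituting into the left-hand side of (\ref{E2.20c}) as worded gives
\begin{align*}
\psi + (1-\psi)\cdot q + \frac{\psi}{q} + (1-\psi) = 1 + \frac{a}{2}\cdot\frac{b}{a} + \frac{b}{2}\cdot\frac{a}{b} = 1+\frac{a+b}{2} = 2 .
\end{align*}
This step is the main obstacle, and it is where the plan breaks down. With $q$ and $\psi$ defined literally as in the statement, the expression is identically equal to $2$ for every admissible $\zeta\neq0$. Hence the strict inequality cannot be derived from these definitions: it is false as stated.

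The most I would therefore prove is the non-strict version, in fact the equality above. I would record that equality holds for every $y$ with $1_{n\times 1}^T\cdot y\neq 0$, and that the hypothesis $1_{n\times 1}^T\cdot y\neq 0$ is only needed so that $t\neq 0$ and $\psi$ is uniquely determined. For $y = x^{\star}$ we have $z(\pm\zeta)=y$, so $q=1$ and $\psi$ is undetermined.

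To obtain a strict inequality, some definition would have to change. Trying the natural weighted AM–GM argument on $(1-\psi)\cdot q + \psi/q$ shows that under the literal weights $(a/2,\,b/2)$ and $q=b/a$ it gives exactly $1$, with no slack. Strictness would appear only if the weights were paired with the opposite ratio, i.e. if $q$ and $1/q$ were interchanged relative to $\psi$ and $1-\psi$. That is a different statement from the one given.
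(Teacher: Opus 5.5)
Your computation is correct, and the statement is indeed false as written: the left-hand side equals exactly $2$. Your route is the same as the paper's: reduce to scalars via Lemma \ref{L2.4}, solve for $\psi$, substitute. The paper reaches the strict inequality only because of an algebra slip.

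With $t = 2\zeta\lambda$, the paper expands $y = \psi\bigl(1+\frac{t}{1-t}\bigr)y + (1-\psi)\bigl(1-\frac{t}{1+t}\bigr)y$. It then writes $\psi\cdot\frac{t}{1+t} = (1-\psi)\cdot\frac{t}{1-t}$. Cancelling the constant terms actually gives $\psi\cdot\frac{t}{1-t} = (1-\psi)\cdot\frac{t}{1+t}$.

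The swapped denominators produce $\psi = \frac{1+t}{2}$ in (\ref{E2.40}) instead of your $\psi = \frac{1-t}{2}$. The paper's value $\frac{1+3t^2}{1-t^2}$ for $(1-\psi)q + \psi/q$ is that expression evaluated at the wrong $\psi$; at the correct $\psi$ it is exactly $1$.

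A numerical check confirms this. For $t = \frac12$ one has $z(\zeta) = \frac23 y$ and $z(-\zeta) = 2y$. The paper's $\psi = \frac34$ gives $\frac32 y + \frac16 y \neq y$, while $\psi = \frac14$ works. Then $q = \frac13$, and the left-hand side is $\frac14+\frac14+\frac34+\frac34 = 2$.

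You can also see the identity without solving for $\psi$. Since $z(\zeta) = q\,z(-\zeta)$ lies on the same ray as $y$, one gets $\psi + (1-\psi)q = \|y\|/\|z(-\zeta)\| = 1-t$ and $\psi/q + (1-\psi) = \|y\|/\|z(\zeta)\| = 1+t$. So the left-hand side is $2$ for every admissible $\zeta$; equivalently, $\|y\|$ is the harmonic mean of $\|z(\zeta)\|$ and $\|z(-\zeta)\|$.

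The same identity gives $\frac{a_k+b_k}{2} = 1$ for every $k$ in the proof of Theorem \ref{T2.6}. The strict inequality used there is therefore unavailable, and that argument only yields $\sum_k \gamma_k \le 1$.
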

\begin{proof}
\begin{align}\label{E2.36}
\frac{\|z(\zeta)\|}{\|z(-\zeta)\|} &= \frac{ 1 -  \frac{2 \cdot \zeta \cdot \lambda}{1 + 2\cdot \zeta \cdot \lambda} }{1 +  \frac{2 \cdot \zeta \cdot \lambda}{1 - 2\cdot \zeta \cdot \lambda}  } =\frac{1 - 2\cdot \zeta \cdot \lambda}{1 + 2\cdot \zeta \cdot \lambda} =: q
\end{align} 

One can find $\psi \in [0,1]$ with $y = \psi\cdot z(-\zeta) + (1-\psi)\cdot z(\zeta)$ as follows
\begin{align}
y = \psi \cdot \left( 1 +  \frac{2 \cdot \zeta \cdot \lambda}{1 - 2\cdot \zeta \cdot \lambda} \right) \cdot y + (1-\psi)\cdot \left( 1 -  \frac{2 \cdot \zeta \cdot \lambda}{1 + 2\cdot \zeta \cdot \lambda} \right) \cdot y
\end{align} therefore
\begin{align}
\psi \cdot \frac{2\cdot \zeta \cdot \lambda}{1 + 2\cdot \zeta \cdot \lambda} = (1-\psi) \cdot \frac{2\cdot \zeta \cdot \lambda}{1 - 2\cdot \zeta \cdot \lambda} = \frac{2\cdot \zeta \cdot \lambda}{1 - 2\cdot \zeta \cdot \lambda} - \psi \cdot \frac{2\cdot \zeta \cdot \lambda}{1 - 2\cdot \zeta \cdot \lambda}
\end{align} then

\begin{align}
&\psi \cdot 2\cdot \zeta \cdot \lambda\cdot \left(\frac{1}{1 + 2\cdot \zeta \cdot \lambda} + \frac{1}{1 - 2\cdot \zeta \cdot \lambda} \right) = \frac{2\cdot \zeta \cdot \lambda}{1 - 2\cdot \zeta \cdot \lambda} \nonumber \\
&\psi \cdot \frac{2}{1 - (2\cdot \zeta \cdot \lambda)^2} = \frac{1}{1 - 2\cdot \zeta \cdot \lambda}
\end{align} hence 
\begin{align}\label{E2.40}
\psi = \frac{1 + 2\cdot \zeta \cdot \lambda}{2}
\end{align}
%

and analyze
\begin{align} \label{E2.41b}
&\psi + (1-\psi)\cdot q + \frac{\psi}{q} + (1-\psi) >^{?} 2  \hspace{0.2cm} \iff \hspace{0.2cm}q - \psi\cdot q + \frac{\psi}{q} >^{?}1
\end{align} For simplicity we continue with $q = \frac{1-x}{1+x}$ and $\psi = \frac{1+x}{2}$ with $x = 2\cdot \zeta \cdot \lambda \in \mathbb{R}$. It is obtained:
\begin{align}
&\frac{1-x}{1+x} - \frac{1-x}{2} + \frac{\frac{1+x}{2}}{\frac{1-x}{1+x}} = \frac{1-x}{1+x} - \frac{1-x}{2} + \frac{1}{2}\cdot \frac{(1+x)^2}{1-x} \nonumber \\
& = \frac{1}{2}\cdot \frac{2\cdot (1-x)^2 - (1-x)^2\cdot (1+x) + (1+x)^3}{1-x^2} \nonumber \\
& = \frac{3\cdot x^2+1}{1-x^2} > 1
\end{align} for small $x \neq 0$. Since we assumed that $y^T\cdot 1_{n\times 1} \neq 0$ the claim follows. 
\end{proof}

Finally, we prove that the intersection of the polytopes $\mathcal{P}_{C_S(\zeta), r(\zeta)^2}$ and  $\mathcal{P}_{C_S(-\zeta), r(-\zeta)^2}$ is strictly included in the polytope  $\mathcal{P}_{C_S(0), r(0)^2}$. This result is essential for claiming an efficient algorithm to find $x^{\star}$, because it is known that $x^{\star}$ is belongs to the intersection of the three polytopes. As such, one can use linear programming to retrieve it.

\begin{theorem}\label{T2.6}
Let $x \in  \mathcal{P}_{C_S(-\zeta), r(-\zeta)^2} \cap \mathcal{P}_{C_S(\zeta), r(\zeta)^2} \setminus \{x^{\star}\}$ then 
\begin{align}
x \in \text{int} \left( \mathcal{P}_{C_S, r^2}\right)
\end{align} 
\end{theorem}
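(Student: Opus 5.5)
Since the three polytopes are combinatorically equivalent, the plan is to work one corner at a time. Every point of a polytope is a convex combination of its vertices, so it suffices to understand how the corners of $\mathcal{P}_{C_S(\zeta), r(\zeta)^2}$ and $\mathcal{P}_{C_S(-\zeta), r(-\zeta)^2}$ sit relative to $\mathcal{P}_{C_S, r^2}$.

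First, use Lemma \ref{L2.4}. Each vertex $y$ of $\mathcal{P}_{C_S,r^2}$ has perturbed counterparts $z(\zeta)$ and $z(-\zeta)$ on the same radial line through the origin. For $\lambda\neq 0$ they lie on opposite sides of $y$, and $y=\psi z(-\zeta)+(1-\psi)z(\zeta)$ with $\psi=(1+2\zeta\lambda)/2$ as in (\ref{E2.40}). Next, fix a facet of $\mathcal{P}_{C_S,r^2}$ with inequality $a_k^T x\le b_k$, where $a_k = 2(C_S-C_k)$. Write the corresponding facet inequalities of the perturbed polytopes as $(a_k\pm 2\zeta 1_{n\times 1})^T x\le b_k^{\pm}$. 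The key observation is that for $\zeta$ and $-\zeta$ the normals are $a_k \pm 2\zeta 1$. Because $r(\zeta)^2$ is defined as in (\ref{E2.8}), the right-hand sides are $b_k^{\pm} = b_k$ (indeed $-r(\zeta)^2+\|C_S(\zeta)\|^2 = -r^2+\|C_S\|^2$). Averaging the two inequalities therefore gives $a_k^T x\le b_k$ for every $x$ in the intersection. This already shows $\mathcal{P}_{C_S(\zeta),r(\zeta)^2}\cap\mathcal{P}_{C_S(-\zeta),r(-\zeta)^2}\subseteq\mathcal{P}_{C_S,r^2}$.

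For strictness, suppose $x$ lies in the intersection and on the boundary of $\mathcal{P}_{C_S,r^2}$, so $a_k^T x=b_k$ for some $k$. Averaging then forces equality in both perturbed inequalities, which gives $1_{n\times 1}^T x=0$ together with $a_k^T x=b_k$. Now collect all active indices $K$ at $x$. The point $x$ lies on the face $F_K$ of $\mathcal{P}_{C_S,r^2}$, on the hyperplane $1^T x=0$, and it satisfies both perturbed systems with the same active set. The task is to show $x$ must be the vertex $x^\star$. My plan is to use the face structure here. On the face $F_K$, the perturbed polytopes cut along the same facets, and any point of $F_K$ is a convex combination of vertices $y$ of $F_K$. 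By the remark following Lemma \ref{L2.4}, $x^\star$ is the only vertex with $1^Ty=0$, so every other vertex of $F_K$ has $\lambda\neq0$. For such vertices, the preceding lemma (inequality (\ref{E2.20c})) quantifies that the perturbed corners are pushed asymmetrically: the corner moving outward moves farther, by the factor $q$ or $1/q$, than the inward one. I will turn this into the statement that along the line through $y$, the intersection of the perturbed faces is a segment strictly inside the face near $y$. Combining this over vertices of $F_K$ should show that the only point of $F_K$ common to both perturbed faces and lying on $1^Tx=0$ is $x^\star$.

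I expect the last step to be the main obstacle. Passing from per-vertex radial statements to the whole face is not automatic, because the perturbed faces are not scalings of $F_K$: different vertices move by different factors. I would first try a direct linear-algebra argument instead. The equalities $a_k^Tx=b_k$ for $k\in K$ plus $1^Tx=0$ define an affine set. By combinatorial equivalence and nondegeneracy (a generic $C_S$), $|K|=n$ would pin $x$ to a vertex with $1^Tx=0$, which must be $x^\star$. When $|K|<n$, I would need the extra inequalities of the other facets, and the estimate (\ref{E2.20c}), to rule out other points.
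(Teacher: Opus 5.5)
\textbf{The statement is false, and your last step cannot be completed.} Your averaging step is correct, but pushed one step further it refutes the statement instead of proving it. Take $a_k=2(C_S-C_k)$ and the common right-hand side $b=-r^2+\|C_S\|^2+R^2-\rho^2$. Then the intersection of the two perturbed polytopes is exactly $\{x : a_k^Tx+2|\zeta|\,|1_{n\times 1}^Tx|\le b \ \forall k\}$.

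Hence the converse of your strictness step also holds. Every $x\in\mathcal{P}_{C_S,r^2}$ with $1_{n\times 1}^Tx=0$ satisfies $(a_k\pm 2\zeta 1_{n\times 1})^Tx=a_k^Tx\le b$, so it lies in both perturbed polytopes. The intersection therefore meets $\partial\mathcal{P}_{C_S,r^2}$ in precisely $\partial\mathcal{P}_{C_S,r^2}\cap\{1_{n\times 1}^Tx=0\}$, and that set is not $\{x^\star\}$.

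To see this, use $C_S\in\text{conv}\{C_k\}$. There are $\mu_k\ge 0$ with $\sum_k\mu_k=1$ and $\sum_k\mu_k a_k=0$, so $0=\sum_k\mu_k a_k^Tx\le b$ for every $x\in\mathcal{P}_{C_S,r^2}$. If $b=0$, then $\mathcal{P}_{C_S,r^2}$ is the cone $\{a_k^Tx\le 0\}$, which is bounded only if it is $\{0\}$. So a bounded full-dimensional $\mathcal{P}_{C_S,r^2}$ has $b>0$, and the origin is in its interior. The hyperplane $1_{n\times 1}^Tx=0$ therefore cuts through the polytope. For example, the vertex locus through $0$ and $x^\star$ leaves the polytope again at some $-t\,x^\star$ with $t>0$. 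That point lies in both perturbed polytopes, differs from $x^\star$, and is not interior. For $n\ge 3$, the entire relative boundary of the section $\mathcal{P}_{C_S,r^2}\cap\{1_{n\times 1}^Tx=0\}$ consists of such points. So your final step, showing that an active point with $1_{n\times 1}^Tx=0$ must be $x^\star$, fails however you handle the case $|K|<n$.

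\textbf{The inequality (\ref{E2.20c}) you planned to use is also false, and this is where the paper's own proof breaks.} By Lemma \ref{L2.4}, $z(\pm\zeta)=y/(1\pm 2\zeta\lambda)$. So $y=\psi z(-\zeta)+(1-\psi)z(\zeta)$ forces $\psi=(1-2\zeta\lambda)/2$. The value $(1+2\zeta\lambda)/2$ in (\ref{E2.40}) comes from swapping the two fractions when solving for $\psi$. With that value, $\psi z(-\zeta)+(1-\psi)z(\zeta)=\frac{1+4\zeta^2\lambda^2}{1-4\zeta^2\lambda^2}\,y\neq y$.

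With the correct $\psi$ one gets $\psi+(1-\psi)q=1-2\zeta\lambda$ and $\psi/q+(1-\psi)=1+2\zeta\lambda$, so the left side of (\ref{E2.20c}) is exactly $2$. The outward corner does move farther. But $\|y\|$ is precisely the harmonic mean of $\|z(\zeta)\|$ and $\|z(-\zeta)\|$, and that is all the membership test sees.

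The paper's vertex-coordinate argument then yields only $\sum_k\gamma_k\le 1$. That is the same non-strict containment your averaging gives. What can actually be proved is this containment together with the exact contact set described above. Singling out $x^\star$, and the linear-programming retrieval that rests on Theorem \ref{T2.6}, would need a genuinely different idea.
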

\begin{proof}
Assume w.l.o.g. that $\underline{y}_1, \hdots, \underline{y}_n$ are some vertices in $\mathcal{P}_{C_S(-\zeta), r^2(-\zeta)}$ such that 
\begin{align}
x = \sum_{k=1}^n \alpha_k \cdot \underline{y}_k \hspace{0.5cm} \text{with}\hspace{0.5cm} \sum_{k=1}^n \alpha_k \leq 1
\end{align} then the corresponding vertices in $\mathcal{P}_{C_S(\zeta), r^2(\zeta)}$ are $\overline{y}_1, \hdots, \overline{y}_n$ and one has
\begin{align}
x = \sum_{k=1}^n \beta_k \cdot \overline{y}_k \hspace{0.5cm} \text{with}\hspace{0.5cm} \sum_{k=1}^n \beta_k \leq 1.
\end{align} Letting $y_1, \hdots, y_n$ denote the vertices in $\mathcal{P}_{C_S(0), r(0)^2}$ we write 
\begin{align}\label{E2.47}
x = \sum_{k=1}^n \gamma_k \cdot y_k \hspace{0.5cm} \text{and ask}\hspace{0.5cm} \sum_{k=1}^n \gamma_k <^{?} 1.
\end{align}

Note that $y_k = \psi_k \cdot \underline{y}_k + (1- \psi_k)\cdot \overline{y}_k$ where from (\ref{E2.40}) one has $\psi_k = \frac{1 + 2 \cdot \zeta \cdot \lambda_k}{2}$ and $\frac{\|\overline{y}_k\|}{\|\underline{y}_k\|} = q_k = \frac{1 - 2 \cdot \zeta \cdot \lambda_k}{1 + 2 \cdot \zeta \cdot \lambda_k}$, see (\ref{E2.36}). 

As such (\ref{E2.47}) becomes 
\begin{align}
x &= \sum_{k=1}^n \gamma_k \cdot \left( \psi_k \cdot \underline{y}_k + (1-\psi_k)\cdot \overline{y}_k\right) \nonumber \\
& = \sum_{k=1}^n \gamma_k \cdot \left( \psi_k  + (1-\psi_k)\cdot q_k\right) \cdot \underline{y}_k \hspace{0.5cm} \Rightarrow \hspace{0.5cm} \alpha_k =  \gamma_k \cdot \left( \psi_k  + (1-\psi_k)\cdot q_k\right) \nonumber \\
& = \sum_{k=1}^n \gamma_k \cdot \left( \psi_k \cdot \frac{1}{q_k} + (1-\psi_k)\cdot \right) \cdot \overline{y}_k  \hspace{0.3cm} \Rightarrow \hspace{0.3cm} \beta_k =  \gamma_k \cdot \left( \psi_k \cdot \frac{1}{q_k} + (1-\psi_k)\right) 
\end{align} Let $a_k =  \psi_k  + (1-\psi_k)\cdot q_k$ and $b_k = \psi_k \cdot \frac{1}{q_k} + (1-\psi_k)$ and we have
\begin{align}
1 &\geq \sum_{k=1}^n \alpha_k = \sum_{k=1}^n \gamma_k \cdot a_k \nonumber \\
1 &\geq \sum_{k=1}^n \beta_k = \sum_{k=1}^n \gamma_k \cdot b_k
\end{align} therefore
\begin{align}
\sum_{k=1}^n  \gamma_k \cdot \frac{a_k + b_k}{2} \leq 1
\end{align}

Finally, since $\gamma_k > 0$ and from (\ref{E2.20c}) follows that $\frac{a_k + b_k}{2} > 1$ for all $y_k^T\cdot 1_{n \times 1} \neq 0$ and $\frac{a_k + b_k}{2} = 1$ for $y_k^T \cdot 1_{n \times 1} = 0$ follows that 
\begin{align}
\sum_{k=1}^n \gamma_k < 1
\end{align} since, as assumed, in our case not all $y_1, \hdots, y_n$ can meet $y_k^T\cdot 1_{n\times 1}$. 
\end{proof}

We can now give the proof for Theorem \ref{T1.1}

\begin{theorem}
  Exists an algorithm  of polynomial complexity in $m,n$ algorithm which finds $x^{\star}$, the solution to Problem \ref{P1}.
\end{theorem}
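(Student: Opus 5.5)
The plan is to reduce the problem to a single linear program whose feasible set is the intersection of the two symmetrically perturbed max-indicator polytopes, and to read off $x^{\star}$ as the unique feasible point lying on the boundary of the unperturbed polytope $\mathcal{P}_{C_S,r^2}$.

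\textbf{Choosing the data.} First I fix the data in polynomial time. I take $C_S=\frac{1}{m}\sum_{k=1}^m C_k\in\text{conv}\{C_1,\hdots,C_m\}$, which is computable in $O(mn)$; if it happens to be $0_{n\times 1}$, any other rational convex combination works. I pick some $r>0$ and a small $\zeta\in\left(0,\frac{\epsilon}{\sqrt{n}}\right]$. Then I form $C_S(\pm\zeta)$ and $r(\pm\zeta)^2$ according to (\ref{E2.8}). Each polytope $\mathcal{P}_{D,s^2}$ is described explicitly by the $m$ linear inequalities
\begin{align}
2\cdot (D-C_k)^T\cdot x \leq -s^2+\|D\|^2+R^2-\rho^2, \hspace{0.5cm} k=1,\hdots,m.
\end{align}
So $\mathcal{P}_{C_S(-\zeta),r(-\zeta)^2}\cap\mathcal{P}_{C_S(\zeta),r(\zeta)^2}$ is given by $2m$ inequalities in $n$ variables. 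By the remark following Lemma \ref{L2.4}, $x^{\star}$ is a vertex of both perturbed polytopes and of $\mathcal{P}_{C_S,r^2}$. In particular it lies in their intersection and on the boundary of $\mathcal{P}_{C_S,r^2}$.

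\textbf{The linear program.} By Theorem \ref{T2.6}, every other point of the intersection lies in $\text{int}(\mathcal{P}_{C_S,r^2})$. Hence $x^{\star}$ is the unique point of the intersection at which some constraint of $\mathcal{P}_{C_S,r^2}$ is active. Writing $a_k^T x\leq b_k$ for the constraints of $\mathcal{P}_{C_S,r^2}$, I would solve, for each $k=1,\hdots,m$, the linear program
\begin{align}
\max_{x} \; a_k^T\cdot x \hspace{0.5cm} \text{s.t.} \hspace{0.3cm} x\in \mathcal{P}_{C_S(-\zeta),r(-\zeta)^2}\cap\mathcal{P}_{C_S(\zeta),r(\zeta)^2}.
\end{align}
For any facet index $k$ active at $x^{\star}$, the optimum equals $b_k$, and by Theorem \ref{T2.6} it is attained only at $x^{\star}$. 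So I return the optimizer of any such program whose value reaches $b_k$. An alternative is a single LP maximizing $\max_k(a_k^Tx-b_k)$ via an epigraph variable. That objective is convex, though, so maximizing it is not an LP, which is why I prefer the $m$ separate LPs. Each LP has polynomially many constraints with coefficients of size polynomial in the input, so the interior-point or ellipsoid method solves it in time polynomial in $m$, $n$ and the bit length. The total cost is $m$ such calls plus $O(mn)$ preprocessing.

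\textbf{Main obstacle.} The delicate step is justifying the standing assumption under which Theorem \ref{T2.6} was proved. That assumption is combinatorial equivalence of $\mathcal{P}_{C_S(\zeta),r(\zeta)^2}$ with the unperturbed polytope for $|\zeta|\leq \epsilon/\sqrt{n}$, together with boundedness of the polytope so that the LPs have optima. The choice of $\zeta$ must also be computable with polynomially many bits. I would first argue that, for generic data, the vertex systems $\Lambda$ are nonsingular and the vertex sets vary continuously in $\zeta$ by Lemma \ref{L2.4}. Combinatorial type is then locally constant, so a $\zeta$ given by an explicit polynomial-size bound works. Failing that, I would simply take the standing assumption of the paper as given, with $\epsilon$ part of the input, and set $\zeta=\epsilon/\sqrt{n}$. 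A secondary point is exact arithmetic. With rational data, the optimal vertex is a rational point of polynomial bit size, so the value test $a_k^Tx=b_k$ can be checked exactly.
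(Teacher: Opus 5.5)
Your proposal is the paper's own argument: the same linear programs over $\mathcal{P}_{C_S(-\zeta),r(-\zeta)^2}\cap\mathcal{P}_{C_S(\zeta),r(\zeta)^2}$, justified by Theorem \ref{T2.6}. It adds one sensible sharpening, namely that the right program is recognized by its optimal value reaching $b_k$. \textbf{The step that fails is the appeal to Theorem \ref{T2.6}.} The theorem itself is false, so the paper's proof has the same gap.

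Write $H=\{x : 1_{n\times 1}^T\cdot x=0\}$, and let $y^{\star}$ be the vertex of $\mathcal{P}_{C_S,r^2}$ on $H$ (what Theorem \ref{T2.6} calls $x^{\star}$). With $\|C_k\|=\rho$ and $r(\zeta)^2$ from (\ref{E2.8}), every perturbed constraint has right-hand side $\Psi=-r^2+\|C_S\|^2+R^2-\rho^2$, independent of $\zeta$. Hence, for $\zeta>0$,
\begin{align*}
\mathcal{P}_{C_S(-\zeta),r(-\zeta)^2}\cap\mathcal{P}_{C_S(\zeta),r(\zeta)^2}=\left\{x : 2\cdot (C_S-C_k)^T\cdot x+2\cdot\zeta\cdot\left|1_{n\times 1}^T\cdot x\right|\leq\Psi,\ k=1,\hdots,m\right\}.
\end{align*}
So this intersection meets $\partial\mathcal{P}_{C_S,r^2}$ in exactly $\partial\mathcal{P}_{C_S,r^2}\cap H$, not in $\{y^{\star}\}$.

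This set is large. Take $C_S$ interior to the hull, as your centroid is. Then $\mathcal{P}_{C_S,r^2}$ is bounded, and averaging its constraints with the convex weights of $C_S$ shows it is $n$-dimensional only if $\Psi>0$, that is, only if $0$ is an interior point. In that case $H$ cuts it in an $(n-1)$-dimensional section whose whole relative boundary lies in $\partial\mathcal{P}_{C_S,r^2}$ and in the intersection. For $n=2$, for instance, the other endpoint $w$ of the chord through $y^{\star}$ and $0$ lies in the intersection. Your program for the edge containing $w$ then attains $b_k$ at $w\neq y^{\star}$, so the value test can return the wrong point.

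The slip in the paper is in (\ref{E2.40}). Solving $1=\frac{\psi}{1-2\zeta\lambda}+\frac{1-\psi}{1+2\zeta\lambda}$ gives $\psi=\frac{1-2\zeta\lambda}{2}$. Then $a_k=1-2\zeta\lambda_k$, $b_k=1+2\zeta\lambda_k$ and $\frac{a_k+b_k}{2}=1$ exactly. So (\ref{E2.20c}) holds with equality, and the proof of Theorem \ref{T2.6} only yields $\sum_k\gamma_k\leq 1$.

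The symmetric perturbation therefore adds nothing beyond intersecting with $H$. What remains is to find a vertex of $\mathcal{P}_{C_S,r^2}$ lying on $H$, and neither your argument nor the paper's supplies a polynomial method for that. Your ``main obstacle'' discussion (combinatorial equivalence, bit sizes) is not where the proof breaks.

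\textbf{A second, smaller gap:} even a correctly found $y^{\star}$ is not the answer to Problem \ref{P1}. By Lemma \ref{L2.4} and the remark after it, $y^{\star}$ only lies on the vertex locus, the line through $0$ and the vertex $x^{\star}$ of $\mathcal{Q}$. The two points coincide only when $r=\|x^{\star}-C_S\|$, which you cannot arrange in advance. You identified them because the paper reuses the symbol $x^{\star}$. The paper's closing remark supplies the missing step, a search along that line. For instance, solve $\|t\cdot y^{\star}-C_{k_i}\|^2=R^2$ for one center active at $y^{\star}$ and keep the root lying in $\mathcal{Q}$. Your algorithm needs this step as well.
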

\begin{proof}
From Theorem \ref{T2.6} follows that 
\begin{align}
\mathcal{P}_{C_S(-\zeta), r(-\zeta)^2} \cap \mathcal{P}_{C_S(\zeta), r(\zeta)^2} \setminus \{x^{\star}\} \subseteq \text{int} \left( \mathcal{P}_{C_S(0), r(0)^2} \right)
\end{align} and 
\begin{align}
x^{\star} \in \mathcal{P}_{C_S(-\zeta), r(-\zeta)^2} \cap \mathcal{P}_{C_S(\zeta), r(\zeta)^2} \setminus \{x^{\star}\} \cap \mathcal{P}_{C_S(0), r(0)^2} 
\end{align} As such, one retrieves $x^{\star}$ using linear programming. For any facet $\mathcal{F} = \{x | F^T\cdot x + f \leq 0\}$ of $\mathcal{P}_{C_S(0), r(0)^2}$ solve the linear program
\begin{align}
\mathop{\text{argmax}}_{x \in  \mathcal{P}_{C_S(-\zeta), r(-\zeta)^2} \cap \mathcal{P}_{C_S(\zeta), r(\zeta)^2} } \hspace{0.5cm} x^T\cdot F.
\end{align} The facets $\mathcal{F}$ forming the vertex $x^{\star}$ in $\mathcal{P}_{C_S(0), r(0)^2}$ are maximized by $x^{\star}$ hence this point is one of the solutions of the linear optimization problems. 
\end{proof}

\begin{remark}
As a final remark, once the vertex of $\mathcal{P}_{C_S, r^2}$ on the vertex locus on the hyperplane $1^T_{n \times 1} = 0$ is found, the vertex of $\mathcal{Q}$ on the same hyperplane is found as a line search on the semiaxis passing through the found polytope vertex and the origin. 
\end{remark}

\section{Conclusion}
In this paper a geometrical problem was proposed and a polynomial algorithm was given. The problem is concerned with retrieving the unique vertex of a ball-polyhedra defined as an intersection of balls of equal radii and with centers on a sphere centered in origin. Additional assumptions were made about the analyzed ball polyhedra and this special vertex: the vertices of the ball polyhedra do not belong to the hyperplane passing through origin and with normal vector $1_{n \times 1}$, except this special vertex.

Altough the problem is in itself interesting because is shows the so called "hidden convexity" behavior, as future work we focus on adapting this problem, perhaps through space rotations, to obtain similar results for a general vector, say $S\in \mathbb{R}^n$, instead of $1_{n \times 1}$. 

The class of ball polyhedra considered here includes realizations of the hypercube $\{-1,1\}^n$. Consequently, the distinguished-vertex retrieval problem may be viewed as a geometric unique-witness problem. Possible connections with unique-solution combinatorial optimization problems will be investigated elsewhere.

\end{document}